\definecolor{shadecolor}{gray}{0.875}
\numberwithin{equation}{section}
\tikzset{sgplattice/.style={inner sep=1pt,norm/.style={red!50!blue},char/.style={blue!50!black},
  lin/.style={black!50}},cnj/.style={black!50,yshift=-2.5pt,left=-1pt of #1,scale=0.5,fill=white}}
\DeclareFontFamily{U}{mathb}{\hyphenchar\font45}
\DeclareFontShape{U}{mathb}{m}{n}{
      <5> <6> <7> <8> <9> <10> gen * mathb
      <10.95> mathb10 <12> <14.4> <17.28> <20.74> <24.88> mathb12
      }{}
\DeclareSymbolFont{mathb}{U}{mathb}{m}{n}
\DeclareMathSymbol{\righttoleftarrow}{3}{mathb}{"FD}
\theoremstyle{plain}
\newtheorem{prop}{Proposition}[section]
\newtheorem{theo}[prop]{Theorem}
\theoremstyle{definition}
\newtheorem{exam}[prop]{Example}
\newcommand{\eqto}{\stackrel{\lower1.5pt\hbox{$\scriptstyle\sim\,$}}\to}
\newcommand{\eqdashto}{\stackrel{\lower1.5pt\hbox{$\scriptstyle\sim\,$}}\dashrightarrow}
\newcommand{\actsfromleft}{\mathrel{\reflectbox{$\righttoleftarrow$}}}
\newcommand{\actsfromright}{\righttoleftarrow}
\def\cO{{\mathcal O}}
\def\bA{{\mathbb A}}
\def\bP{{\mathbb P}}
\def\bZ{{\mathbb Z}}
\def\bC{{\mathbb C}}
\def\rH{{\mathrm H}}
\def\Pic{\mathrm{Pic}}
\def\Aut{\mathrm{Aut}}
\def\Burn{\mathrm{Burn}}
\def\lim{\mathrm{lim}}
\def\NS{\mathrm{NS}}
\def\IJ{\mathrm{IJ}}
\def\JJ{\mathrm{J}}
\def\Cr{\mathrm{Cr}}
\author{Andrew Kresch}
\address{
  Institut f\"ur Mathematik,
  Universit\"at Z\"urich,
  Winterthurerstrasse 190,
  CH-8057 Z\"urich, Switzerland
}
\email{andrew.kresch@math.uzh.ch}
\author{Sho Tanimoto}
\address{Graduate School of Mathematics\\ Nagoya University
\\ Nagoya, 464-8602, Japan}
\email{sho.tanimoto@math.nagoya-u.ac.jp}
\author{Yuri Tschinkel}
\address{Courant Institute\\
                New York University \\
                New York, NY 10012 \\
                USA }
\address{Simons Foundation\\
                 160 Fifth Av.\\ 
                 New York, NY 10010}                
\email{tschinkel@cims.nyu.edu}
\title[Intermediate Jacobians and Burnside invariants]{Intermediate Jacobians and Burnside invariants}
\begin{document}

\date{December 12, 2025}

\begin{abstract}
We propose new invariants in equivariant birational geometry, combining equivariant intermediate Jacobians and the Burnside formalism, for smooth rationally connected threefolds with actions of finite groups.
\end{abstract}

\maketitle

\section{Introduction}
\label{sect:intro}

This note is inspired by \cite{CKK}, which introduced a version of {\em atomic} birational invariants of \cite{KKP} into equivariant geometry. 

We recall the main problem in this area: to determine whether a given generically free regular action of a finite group $G$ on a smooth projective rational variety $X$ of dimension $n$, over $\bC$, is equivariantly birational to a linear, or projectively linear, action on $\bP^n$, i.e., to an action arising from a projectivization  $\bP(V)$ of a $(n+1)$-dimensional representation $V$ of $G$, respectively, of a central extension of $G$. We refer to \cite{HKT-small} and \cite{HT-odd} for an introduction to these notions. 
The linearization problem is settled in dimension 2 \cite{pinardin}, but is largely open in dimensions $\ge 3$. 

Here, we focus on threefolds. We 
connect the Burnside formalism of \cite{BnG} with the theory of (equivariant) intermediate Jacobians to recover the most striking applications in \cite{CKK} in a more classical framework. Concretely, the invariants we offer take into account only the stabilizer stratification and the $G$-action on the intermediate Jacobian. 

Our main contributions in this paper are:
\begin{itemize}
\item 
definition of new birational invariants of $G$-actions on rationally connected threefolds over $\bC$, for arbitrary finite $G$, in Section~\ref{sect:defn};
\item 
Proposition~\ref{prop:G}, a classical analog of \cite[Theorem 3.6]{CKK}; 
\item 
applications to conic bundles, quadric surface bundles, and nodal cubic threefolds, in Section~\ref{sect:appl}. 
\end{itemize}

\

\noindent
{\bf Acknowledgments:}
The second author was partially supported by JST FOREST program Grant number JPMJFR212Z, and by JSPS KAKENHI Grand-in-Aid (B) 23K25764.
The third author was partially supported by NSF grant 2301983.

\section{Generalities}
\label{sect:gen}

\subsection*{Notation}
Throughout, we work over $k=\bC$, the complex numbers. We let $G$ be a finite group. By convention, $G$-actions on varieties are from the right, we emphasize this by writing $X\actsfromright G$; correspondingly, the action on the function field is from the left, $G\actsfromleft k(X)$.  
We write 
$$
X\sim_G X'
$$ 
to indicate $G$-equivariant birationality of $X$ and $X'$. 

We briefly recall the framework of equivariant intermediate Jacobians as in \cite{CTT} and 
the main ingredients of the Burnside formalism developed in \cite{BnG} which are relevant for the construction of enhanced birational invariants. 

\subsection*{Curves and their Jacobians}

Let $C$ be an irreducible smooth projective curve of genus $g(C)\ge 1$
and $(\mathrm{J}(C),\theta_C)$ its Jacobian, with its
principal polarization. 
We have a homomorphism
\begin{equation}
\label{eqn.AutAut}
\Aut(C)\to \Aut((\mathrm{J}(C),\theta_C)). \end{equation}
It is well-known (see, e.g., \cite[Section 4]{Matsusaka}) that by \eqref{eqn.AutAut},
$$
\begin{cases} 
\Aut(C)/C(k)\cong \Aut((\mathrm{J}(C),\theta_C)), & \text{if } g(C)=1, \\ 
\Aut(C)\cong \Aut((\mathrm{J}(C),\theta_C)), & \text{if hyperelliptic}, g(C)\ge 2, \\
\Aut(C)\times\{\pm1\}\cong \Aut((\mathrm{J}(C),\theta_C)), & \text{otherwise}.
\end{cases}
$$

\subsection*{$G$-abelian varieties}
Let $(A,\theta_A)$ be a principally polarized abelian variety. It is called a $G$-equivariant principally  polarized abelian variety if $G$ acts regularly on $A$ preserving both the origin and the class of $\theta_A$ in the N\'eron-Severi group $\NS(A)$; the action is not assumed to be faithful. 

We will use the following observation \cite[Corollary 3.2]{CTT}:
A $G$-equivariant principally polarized abelian variety admits a unique, up to permutation of factors, decomposition as a product of indecomposable $G$-equivariant principally polarized abelian varieties.
In combination with \cite[Corollary 9.2]{Debarre}, we also see that in the non-equivariant decomposition of $A$ as a product of indecomposable principally polarized abelian varieties $A_\delta$, the union $\bigcup_\delta A_\delta$ is $G$-invariant, hence there is an induced $G$-action on the disjoint union $\sqcup_\delta A_\delta$.

\subsection*{Intermediate Jacobians}
Let $X$ be  a smooth projective rationally connected threefold and 
$$
\IJ(X):= \rH^3(X,\bC)/(\rH^1(X,\Omega^2_X)\oplus 
\rH^3(X,\bZ))
$$
its intermediate Jacobian, with its principal polarization
$\theta_X$
arising from the cup product
$$
\wedge^2  \rH^3(X,\bZ)\to \rH^6(X,\bZ)\simeq\bZ. 
$$
When $X$ is rational, $\IJ(X)$ is a product of Jacobians of curves. 

Examples of $X$ with computable intermediate Jacobians are standard conic bundles $\pi\colon X\to S$, with smooth discriminant curve $C\subset S$. There is an associated \'etale double cover $\tau\colon \widetilde{C}\to C$, parametrizing lines over $C$. The intermediate Jacobian $\IJ(X)$ is the Prym variety $\mathrm{P}(\tau)$ associated with $\tau$; it 
is the identity component of the locus in the Jacobian $\mathrm{J}(\widetilde{C})$ where the involution induced by $\tau$ acts as $(-1)$: 
$$
\IJ(X)=\mathrm{P}(\tau)=\mathrm{im}(1-\tau)=\bigl(\ker(1+\tau)\bigr)^0, 
$$
see, e.g., \cite{mumford} for more details regarding this construction.  
Let $G$ act on $X$.
Then $\IJ(X)$ is a $G$-equivariant principally polarized abelian variety.
We consider the non-equivariant decomposition
\begin{equation}
\label{eqn.Delta}
\IJ(X)=\prod_{\delta\in\Delta} \IJ_\delta(X)
\end{equation}
as a product of indecomposable principally polarized abelian varieties.
Then there is an induced $G$-action on $\Delta$, such that the orbits $\Delta/G$ index the
indecomposable $G$-equivariant principally polarized abelian varieties
$$
\IJ_{\omega}(X)=\prod_{\delta\in \omega}  \IJ_\delta(X),\qquad \omega\in \Delta/G.
$$ 
The $G$-action on $\IJ_{\omega}(X)$ induces an action on
$\sqcup_{\delta\in \omega}A_\delta$, transitive on components; in particular, the $A_\delta$, for $\delta\in \omega$, are non-equivariantly isomorphic.
For given $\delta\in \omega$, we get an action of the stabilizer $G_\delta$ on $\IJ_{\delta}(X)$, which we express as faithful action
\begin{equation}
\label{eqn.HdeltaGdelta}
\IJ_{\delta}\actsfromright G_\delta/H_\delta,
\qquad\text{with}\qquad 
H_\delta\subseteq G_\delta.
\end{equation}

Let $C$ be an irreducible smooth projective curve of positive genus and $\mathrm{J}(C)$ its Jacobian, with its principal polarization. 
We will say that $\omega$ is
{\em $C$-relevant} if $\IJ_\delta(X)\cong \mathrm{J}(C)$ for $\delta\in \omega$, as principally polarized abelian varieties.
Then, with the union of the $C$-relevant orbits, we have a $G$-invariant subset
\[
\Delta(C)\subseteq \Delta,
\]
such that the set of $C$-relevant orbits may be recovered as
\[
\{\text{$C$-relevant orbits}\}=\Delta(C)/G\subseteq \Delta/G.
\]

\subsection*{Burnside formalism}

The Burnside formalism takes as input for the analysis of a regular $G$-action on a smooth projective $X$ the following data: 
\begin{itemize}
    \item the stabilizer stratification, 
    \item representations of the stabilizers in the normal bundles of strata. 
\end{itemize}
A key initial step is passage to a birational model in \emph{divisorial form}.
This is a model satisfying the condition called
\emph{Assumption 2} in \cite[Section 3]{BnG}.
On such a model, the stabilizers are abelian, and their representations in the normal bundles decompose into direct sums of characters.  By \cite[Proposition 3.6]{BnG}, 
two birational models in divisorial form can be connected by a sequence of blow-ups and blow-downs with smooth centers, and each intermediate model is in divisorial form. 

In particular, this condition is satisfied if the action is in {\em standard form}, i.e., $X$ is 
smooth projective, with simple normal crossing boundary divisor
$$
D=\cup_{\alpha}D_\alpha,
$$
such that
\begin{itemize}
\item 
we have a free action of $G$ on $X\setminus D$, and
\item 
for all $\alpha$ and $g\in G$, either $g(D_\alpha)=D_{\alpha}$ or $g(D_\alpha)\cap D_\alpha=\emptyset$. 
\end{itemize}

The class of the action on an $n$-dimensional $X$, in divisorial form, is defined as a sum of symbols
\begin{equation}
    \label{eqn:symb} 
[X\actsfromright G] :=\sum_H\sum_F (H,Y\actsfromleft k(F),\beta),
\end{equation}
a sum over representatives $H$ of conjugacy classes 
of abelian subgroups of $G$
and over strata $F$ of dimension $d$ with abelian stabilizer $H$. The symbols record 
\begin{itemize}
    \item the 
residual action of a subgroup $Y\subseteq Z_G(H)/H$, the quotient of the centralizer of $H$ in $G$ by $H$, on the function field of $F$, and
\item a sequence $\beta=(b_1,\ldots, b_{n-d})$ of characters of $H$, which appear in the normal bundle to $F$. 
\end{itemize}
The expression takes values in a group
$$
\Burn_n(G)
$$
defined by symbols as in \eqref{eqn:symb}, subject to explicit relations \cite[Section 4]{BnG}. This group has an intricate internal structure. In particular, it admits a direct sum decomposition based on the birational class of the MRC quotient of the stratum $F$, by \cite[Remark 3.5]{KT-struct}. In the following section, we develop this framework in dimension 3, additionally taking into account information about the $G$-action on the intermediate Jacobian of the threefold. 

\section{Curve-localized Burnside groups}
\label{sect:defn}

We proceed with the definition of new invariants for $G$-actions on rationally connected threefolds combining intermediate Jacobians and Burnside invariants.

Let $C$ be an irreducible smooth projective curve of genus $\ge 1$. 
We define the \emph{$C$-localized Burnside group}
$$
\Burn_3^C(G)
$$
by generators and relations. The computation of the class of the $G$-action on a smooth projective rationally connected threefold $X$ in this group takes
into account only those strata in the stabilizer stratification and only those components of the intermediate Jacobian that are ``related'' to $C$, i.e., copies of $C$, ruled surfaces over $C$, and $\mathrm{J}(C)$.  

\subsection*{Generators}
The generators are symbols
\begin{align*}
& (H, Y \actsfromleft K, (b_1,b_2)), \\
& (H, Y \actsfromleft L, (b)), \\ 
& (H, \mathrm J \actsfromright Y),
\end{align*}
where, respectively,
\begin{itemize}
\item $H\subseteq G$ is an abelian subgroup, nontrivial characters $b_1,b_2$ generate the dual $H^\vee$, 
and $Y\subseteq Z_G(H)/H$ is a subgroup that acts faithfully on $K\cong k(C)$,
\item $H\subseteq G$ is nontrivial cyclic, with character group generated by $b$, and $Y\subseteq Z_G(H)/H$ acts faithfully on $L\cong k(C\times \bP^1)$, 
\item $H\subseteq G$, and
$Y\subseteq N_G(H)/H$ is a subgroup, with action on the principally polarized abelian variety $\mathrm{J}\cong \mathrm{J}(C)$ (action and isomorphism compatible with polarization), which
\begin{itemize}
\item is a faithful action, if $g(C)\ge 2$,
\item comes from a faithful action on $C$, if $g(C)=1$.
\end{itemize}
\end{itemize}
The symbols are subject to permutation of characters and conjugation relations as in \cite[Section 4]{BnG}:

{\bf (P)} (Permutation)
$$
(H, Y \actsfromleft K, (b_1,b_2)) = (H, Y \actsfromleft K, (b_2,b_1)), \quad \forall \, b_1,b_2.
$$

\ 

{\bf (C)} (Conjugation)
\begin{align*}
(H, Y \actsfromleft K, (b_1,b_2)) &=(H', Y' \actsfromleft K', (b_1',b_2')), 
\end{align*}
when there exists a $g\in G$ such that 
$$
H'=gHg^{-1},  \quad Y'=gYg^{-1}, 
$$
and $b_1',b_2'$ are $g$-conjugates of $b_1,b_2$;
and similarly for the other two kinds of symbols.

\

The blow-up relations of \cite{BnG} are modified to reflect the fact that a blow-up of a $G$-orbit of $C$
contributes one factor $\mathrm{J}(C)$ to the intermediate Jacobian for every component of the $G$-orbit, with $G$-action permuting the factors.

\

\noindent{\bf (B1)}:
For 
$
H, Y, b_1, b_2
$
as above, with $b_1+b_2 = 0$,
using the action of $Y$ on $\mathrm{J}(C)$ induced by $C \actsfromright Y$, we impose
\[
(H, Y\actsfromleft k(C), (b_1, -b_1)) + (H,   \mathrm{J}(C)\actsfromright Y) = 0.
\]

\noindent{\bf (B2)}:
For 
$
H, Y, b_1, b_2,
$
and action of $Y$ on $\mathrm{J}(C)$ as above,
$$
(H, Y\actsfromleft k(C), (b_1,b_2)) = \Theta_1+\Theta_2+
(H, \mathrm J(C)\actsfromright Y),
$$
where, with $\beta_1=(b_1,b_2-b_1)$, $\beta_2=(b_2,b_1-b_2)$,
\[
\Theta_1= \begin{cases} 0, & \text{if $b_1=b_2$}, \\
(H,Y \actsfromleft k(C), \beta_1)+(H,Y \actsfromleft k(C), \beta_2),
& \text{otherwise},
\end{cases}
\]
and
\[
\Theta_2= \begin{cases} 0, & \text{if $\langle b_1-b_2\rangle=H^\vee$}, \\
(\overline{H},\overline{Y} \actsfromleft k(C\times \bP^1), (b_1|_{\overline{H}})), & \text{otherwise}.
\end{cases}
\]
In the expression for $\Theta_2$ we put $\overline{H}=\ker(b_1-b_2)$ and apply the action construction, see \cite[Section 2]{BnG} or \cite[Section 2]{KT-struct}, 
to obtain $\overline{Y}$, with action on $k(C\times \bP^1)$. 

\ 
 
\noindent{\bf (B3)}:
For any $Y\subseteq G$ and $C\actsfromright Y$,
\[
(1, \mathrm{J}(C) \actsfromright Y) = 0
\]
for the corresponding action of $Y$ on $\mathrm{J}(C)$.

\ 
 
\noindent{\bf (B4)}:
For any $C$ with $g(C)=1$,
\[ (H, \mathrm{J}(C) \actsfromright Y)=(H_1,\mathrm{J}(C) \actsfromright Y/Y_1), \]
where $Y_1$ is the subgroup of $Y$, acting trivially on $\mathrm{J}(C)$, giving rise to $H$-extension $H_1$ of $Y_1$.

\subsection*{The class of the action}
Given a smooth projective rationally connected $G$-threefold in divisorial form, we define the class of the $G$-action in the $C$-localized Burnside group
$$
[X\actsfromright G]^C\in \Burn_3^C(G)
$$
as follows:
\begin{align*}
[X\actsfromright G]^C:=
 & \sum_F (H_F, Y_F\actsfromleft k(F), \beta_F(X)) + {}\\
& \sum_D (H_D, Y_D \actsfromleft k(D),\beta_D(X))  + \sum_\delta(H_\delta,\IJ_\delta(X)\actsfromright Y_{\delta}),
\end{align*}
where
\begin{itemize}
\item the first sum is over orbit representatives $F$ of subvarieties $F\cong C$ of $X$, where the generic stabilizer is $H_F$ and generic normal bundle representation $\beta_F(X)=(b_1,b_2)$ with nontrivial $b_1$, $b_2$,
\item the second sum is over orbit representatives $D$ of divisors in $X$ that are ruled surfaces over $C$, nontrivial generic stabilizer $H_D$, and generic normal bundle representation $\beta_D(X)=(b)$,
\item the third sum is over orbit representatives $\delta$ of $C$-relevant orbits $\omega\in \Delta/G$, with $\Delta$ as in \eqref{eqn.Delta}, where $H_\delta$ is as in \eqref{eqn.HdeltaGdelta},
\item in each sum there is the residual action of $Y_F$ on $k(F)$, respectively $Y_D$ on $k(D)$, respectively $Y_\delta$ on $\IJ_\delta(X)$.
\end{itemize}

\begin{exam} 
\label{exam:d2}
Let $X=S\times \bP^1$, where $S$ is a degree 2 del Pezzo surface.
The threefold $X$ has trivial intermediate Jacobian. Let $G=\bZ/2\bZ$,
acting by the standard covering involution of $\pi: S\to \bP^2$ 
and trivially on $\bP^1$. Let $C$ be the ramification curve of $\pi$, a smooth quadric curve. 
The $G$-fixed locus of $X$ is $C\times \bP^1$. 
The class of the $G$-action in the $C$-localized Burnside group is
$$
[X\actsfromright G]^C = (G,1\actsfromleft k(C\times \bP^1), (1)) \in \Burn_3^C(G).
$$
By relations \textbf{(B1)}--\textbf{(B2)} this is equal to 
$$
-2(G,\mathrm{J}(C)\actsfromright 1);
$$
see the
further discussion in Section \ref{sect:appl}.
\end{exam}

\section{Blow-up relations}
\label{sect:relations}

\begin{theo}
\label{thm:main}
Let $X$ and $X'$ be smooth projective rationally connected threefolds with a regular action of a finite group $G$, in divisorial form. Let $C$ be a curve of genus $\ge 1$. Then
$$
X\sim_G X'\quad \Rightarrow \quad 
[X\actsfromright G]^C=[X'\actsfromright G]^C.
$$
\end{theo}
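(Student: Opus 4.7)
The plan is to reduce the theorem to the case of a single $G$-equivariant smooth blow-up, and to verify that the class $[X\actsfromright G]^{C}$ is preserved under such a blow-up using the defining relations of $\Burn_{3}^{C}(G)$. By \cite[Proposition 3.6]{BnG}, every $G$-birational map between divisorial-form models factors as a sequence of smooth $G$-equivariant blow-ups and blow-downs, with each intermediate model again in divisorial form. It is therefore enough to show $[X\actsfromright G]^{C}=[\widetilde{X}\actsfromright G]^{C}$ whenever $\pi\colon\widetilde{X}\to X$ is the blow-up of a smooth $G$-invariant center $Z$ and both source and target are in divisorial form. The center decomposes into $G$-orbits of smooth irreducible subvarieties of dimension $0$ or $1$, and by additivity of the class it suffices to treat one orbit at a time.

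First I would handle the trivially invariant cases. If $Z$ is a $G$-orbit of a point, the intermediate Jacobian is unchanged and each exceptional component is a $\mathbb{P}^{2}$, which is neither a ruled surface over a genus-$\ge 1$ curve nor contains a subvariety isomorphic to $C$; none of the three sums defining $[\,\cdot\,\actsfromright G]^{C}$ is affected. If $Z$ is a $G$-orbit of a smooth irreducible curve $F$ with $F$ rational, then the exceptional divisor is a Hirzebruch surface (not ruled over a genus-$\ge 1$ curve) and $\mathrm{J}(F)=0$, so again the class is unchanged. If $F$ has genus $\ge 1$ but $F\not\cong C$, then by the Torelli theorem $\mathrm{J}(F)\not\cong \mathrm{J}(C)$, the exceptional ruled surface is over $F\not\cong C$, and the orbit stratum $F$ itself is not recorded in the first sum; so once more no change.

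The essential case is a $G$-orbit of curves $F\cong C$ with generic stabilizer $H$ and normal characters $(b_{1},b_{2})$. Here the blow-up produces four changes in $[X\actsfromright G]^{C}$: (i) the symbol $(H,Y\actsfromleft k(F),(b_{1},b_{2}))$ is removed from the first sum; (ii) a new exceptional divisor appears, $G$-birational to a $\mathbb{P}^{1}$-bundle over $C$, contributing the symbol $\Theta_{2}$ of relation \textbf{(B2)} to the second sum, or $0$ when $\langle b_{1}-b_{2}\rangle=H^{\vee}$; (iii) up to two new subcurves isomorphic to $C$, corresponding to the zero and infinity sections with shifted normal characters $\beta_{1},\beta_{2}$, contribute $\Theta_{1}$ to the first sum; and (iv) a new $G$-equivariantly indecomposable factor $\mathrm{J}(F)\cong \mathrm{J}(C)$ is added to $\IJ(\widetilde{X})$, contributing $(H,\mathrm{J}(C)\actsfromright Y)$ to the third sum. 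Relation \textbf{(B2)} asserts exactly that (i)--(iv) cancel in $\Burn_{3}^{C}(G)$, and the degenerate subcase $b_{1}+b_{2}=0$ is covered by \textbf{(B1)}.

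The hardest part, and the reason for relations \textbf{(B3)} and \textbf{(B4)}, will be reconciling the two distinct group frameworks in play: the geometric stabilizer $H$ with centralizer quotient $Y\subseteq Z_{G}(H)/H$ governing the curve stratum, versus the Jacobian kernel $H_{\delta}$ with normalizer quotient $Y_{\delta}\subseteq N_{G}(H_{\delta})/H_{\delta}$ governing the intermediate Jacobian summand. I would verify, using \cite[Corollary 3.2]{CTT} and the discussion around \eqref{eqn.Delta} and \eqref{eqn.HdeltaGdelta}, that the symbol introduced in (iv) can be identified with the prescribed third-sum symbol, after invoking \textbf{(B3)} to absorb contributions with trivial Jacobian-kernel (corresponding to free $G$-orbits already accounted for by the underlying curve stratum) and \textbf{(B4)} to collapse the genus-$1$ ambiguity between an action on $C$ and the induced action on $\mathrm{J}(C)$ via translations. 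Once this bookkeeping is in place, the cancellations of the essential case are purely formal and the theorem follows.
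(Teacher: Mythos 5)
Your overall strategy---weak factorization via \cite[Proposition 3.6]{BnG} to reduce to a single $G$-equivariant blow-up, then checking invariance of the class against the defining relations---is exactly the paper's, and your treatment of point centers, rational curves, curves of positive genus not isomorphic to $C$, and of the main case via \textbf{(B2)} (with \textbf{(B3)}, \textbf{(B4)} handling the trivial-stabilizer and genus-one situations) matches Cases (1) and (3) of the paper's proof. One small caution on the point case: the right statement is not that the exceptional $\bP^2$ contains no curve isomorphic to $C$ (it may well contain one), but that the new strata of the stabilizer stratification inside it are linear subspaces, hence rational, so nothing enters the first two sums.

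The genuine gap is the omission of the paper's Case (2): the center is an orbit of curves $F\cong C$ with nontrivial generic stabilizer $H$ whose normal-bundle representation is $(0,b)$, i.e.\ $F$ lies inside a divisor of the stabilizer stratification fixed by $H$. Here your bookkeeping (i)--(iv) breaks down already at step (i): the symbol $(H,Y\actsfromleft k(F),(b_1,b_2))$ was never present in the first sum, because by definition that sum records only curves with \emph{both} normal characters nontrivial. After the blow-up, a new curve with characters $(b,-b)$ does enter the first sum and a new $\mathrm{J}(C)$-factor enters the third sum, and it is precisely relation \textbf{(B1)} that makes these two new contributions cancel. Your proposal instead assigns \textbf{(B1)} to ``the degenerate subcase $b_1+b_2=0$'' of the both-nontrivial case---but that subcase is already handled by \textbf{(B2)} (the paper's Case (3) makes no exception for it), so in your reading \textbf{(B1)} would play no role in the proof and the $(0,b)$ situation would be left unverified. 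You need to add this case explicitly and identify it as the geometric source of \textbf{(B1)}.
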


\begin{proof}
By the consequence of functorial weak factorization, recalled in Section \ref{sect:gen},
the theorem reduces to the equality of classes in the $C$-localized Burnside group in the case of a $G$-equivariant blow-up
$$
\varrho \colon X'\to X, 
$$
with smooth center $Z$. Given the shape of the invariant, it suffices to consider the case that $Z$ is the orbit of a curve, isomorphic to $C$, which by abuse of notation we denote by $C$ in the analysis of the possibilities:
\begin{enumerate}
\item $C$ has trivial generic stabilizer,
\item the generic stabilizer $H$ of $C$ is nontrivial and $\beta_C(X)$ is of the form $(0,b)$,
\item the generic stabilizer $H$ of $C$ is nontrivial and $\beta_C(X)=(b_1,b_2)$ with nontrivial $b_1$, $b_2$.
\end{enumerate}

\noindent
{\bf Case (1)}: 
The generic stabilizer of the exceptional divisor $E$ of the blow-up is trivial, and there is no curve $C' \subset E$ which is isomorphic to $C$ and has nontrivial generic stabilizer. 
Thus there is no effect on the first two sums in the expression for $[X\actsfromright G]^C$.
The third sum gets an extra term, but this vanishes by \textbf{(B3)}.

\medskip
In case $g(C)=1$, it may be necessary to combine with \textbf{(B4)} to obtain the claimed vanishing.
In the remaining cases there is a similar, implicit use of \textbf{(B4)} when $g(C)=1$.

\medskip

\noindent{\bf Case (2)}:
In this case, there is a divisor $D$ of the stabilizer stratification such that its generic stabilizer is $H$, and $C\subset D$.
The first sum picks up a term $\beta_F(X)=(b,-b)$, and the third sum gets a contribution from the residual action on $C$.
Their sum vanishes by \textbf{(B1)}.

\medskip

\noindent
{\bf Case (3)}: 
Let $H$ be the generic stabilizer.
Let $E$ be the exceptional divisor. If $b_1 \ne b_2$, then the exceptional divisor $E$ admits two curves with stabilizer $H$, and respective weights $(b_1,b_2-b_1)$ and $(b_2,b_1-b_2)$.
If $\langle b_1-b_2\rangle$ is a proper subgroup of $H^\vee$, then $E$ has nontrivial generic stabilizer $\ker(b_1-b_2)$.
Thus the term $(H,Y\actsfromleft k(C),(b_1,b_2))$ in the first sum gets replaced by $\Theta_1$, with the addition to the second sum of $\Theta_2$, from \textbf{(B2)}.
The third term gets the required extra term, so that the equality $[X\actsfromright G]^C=[X'\actsfromright G]^C$ holds by \textbf{(B2)}.
\end{proof}

\begin{exam}
\label{exam:elliptic}
Let $X$ be a smooth rational threefold with 
a regular involution  $\iota$. Put $G=\langle \iota\rangle$. Let $C\subset X$ be an elliptic curve. We examine the 
blow-up relations:
\begin{itemize}
\item 
$C\subseteq X^G$, with normal bundle $(1,1)$. Blowing up we obtain an exceptional divisor birational to $C\times \bP^1$ with generic stabilizer $G$. We have the relation
$$
- (G, \mathrm{1}\actsfromleft k(C), (1,1)) + (G, \mathrm{1}\actsfromleft k(C\times \bP^1),(1)) + (G,\mathrm{J}(C)\actsfromright \mathrm{1}) =0.
$$
\item 
$C\subseteq X^G$, with normal bundle $(0,1)$, then 
$$
(G,\mathrm{1}\actsfromleft k(C),(1,1)) + (G,\mathrm{J}(C) \actsfromright \mathrm{1}) =0.
$$
\item 
$C$ has a $G$-action via translation by $\bZ/2$ and no stabilizer. After blowing up, we have the relation
$$
(G,\mathrm{J}(C)\actsfromright 1)=(1,\mathrm{J}(C)\stackrel{\text{triv}}{\actsfromright} G)=0.
$$
\item
$C$ has a $G$-action fixing 4 points.  
Then
$$
(1,\mathrm{J}(C)\actsfromright G)=0.
$$
\item $C$ has no $G$-action, no stabilizer.
Then
$$
(1,\mathrm{J}(C)\actsfromright 1)=0.
$$
\end{itemize}
In conclusion, all symbols involving $C$ vanish. 
\end{exam}

\section{Structure}
\label{sect:struct}

The paper \cite{KT-struct} introduced filtrations on the full Burnside group $\Burn_n(G)$, based on 
combinatorial properties of the subgroup lattice of $G$; these allow to simplify the analysis of the class $$
[X\actsfromright G]\in \Burn_n(G),
$$
in some cases. 

Briefly, \cite[Section 3]{KT-struct} introduced the notion of a {\em filter} $\mathbf{H}$, consisting of pairs $(H,Y)$, subject to certain properties, which ensure that the quotient 
$$
\Burn_n(G)\to \Burn_n^{\mathbf{H}}(G)
$$
by symbols with $(H,Y)\notin \mathbf{H}$ is a
well-defined homomorphism to a group that is generated by symbols with $(H,Y)\in \mathbf{H}$, with the same relations as in $\Burn_n(G)$, but applied only to these symbols.

Now let $C$ be an irreducible smooth projective curve of genus $\ge 2$.
Then, by the same reasoning, we have the $C$-localized Burnside group
$$
\Burn^C_3(G)\to \Burn_3^{\mathbf{H},C}(G),
$$
generated by symbols $(H,Y)\in \mathbf{H}$ and with relations \textbf{(B1)}--\textbf{(B3)}, applied only to these symbols.
(When $g=1$ the same reasoning is not applicable on account of the additional relation \textbf{(B4)}.)

For $G$ {\em abelian}, an example of a $G$-filter is
$$
\{(G,1)\},
$$
see \cite[Example 3.4]{KT-struct} and 
\cite[Section 8]{KT22}.
The corresponding Burnside group
\[ \Burn_n^G(G) \]
records only the strata with maximal stabilizer $G$, i.e., $G$-fixed loci. 
An analogous formalism applies to the $C$-localized Burnside group 
\[ \Burn_3^{G,C}(G). \]
We denote by $[X\actsfromright G]^{G,C}$ the image of the class $[X\actsfromright G]^G$, by means of the filter, in $\Burn_3^{G,C}(G)$.
Concretely, this is given by picking out just the symbols with first argument equal to $G$, in the formula for $[X\actsfromright G]^C$.

In particular, specializing to abelian groups $G$, 
we obtain:

\begin{prop}
\label{prop:toZ}
Let $G$ be abelian and $g(C)\ge 2$.
Then there is a homomorphism
\[ \varphi^G\colon \Burn_3^{G,C}(G)\to \bZ, \]
determined by
\begin{align*}
(G, 1 \actsfromleft K, (b_1,b_2)) &\mapsto -1, \\
(G, 1 \actsfromleft L, (b)) &\mapsto -2, \\ 
(G, \mathrm{J} \actsfromright 1) &\mapsto 1.
\end{align*}
\end{prop}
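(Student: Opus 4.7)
The plan is to extend the prescribed assignment $\bZ$-linearly to the free abelian group on the generators of $\Burn_3^{G,C}(G)$ and verify that it vanishes on each defining relation. Because the filter $\{(G,1)\}$ retains only symbols with first slot $G$ and second slot trivial, the generators reduce to the three listed in the proposition. It therefore suffices to check those relations of $\Burn_3^C(G)$ that survive after applying the filter; these come from (P), (C), (B1), (B2), (B3) (recall that (B4) is irrelevant since $g(C)\ge 2$). We tacitly assume $G\ne 1$; for trivial $G$ the statement is vacuous.

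Four of these relations are essentially free. (P) is immediate since $\varphi^G$ assigns $-1$ to a first-type symbol regardless of the order of $(b_1,b_2)$. (C) is vacuous because $G$ is abelian. (B3) involves symbols with first slot $H=1\ne G$, which lie outside the filter and are identified with zero, so the relation collapses to $0=0$. For (B1) with $H=G$, $Y=1$,
\[
(G,1\actsfromleft k(C),(b_1,-b_1))+(G,\mathrm{J}(C)\actsfromright 1)=0,
\]
and $\varphi^G$ sends this to $(-1)+1=0$.

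The substantive step is relation (B2) with $H=G$, $Y=1$, which I would split into three cases. If $b_1=b_2$, then $b_1$ alone must generate $G^\vee$, so $G$ is cyclic; one has $\Theta_1=0$ and, since $\overline G=\ker 0=G$, $\Theta_2=(G,1\actsfromleft k(C\times\bP^1),(b_1))$, so the relation reads $-1=(-2)+1$. If $b_1\ne b_2$ and $\langle b_1-b_2\rangle=G^\vee$, then $\Theta_2=0$ while both summands of $\Theta_1$ are first-type symbols inside the filter, giving $-1=(-1)+(-1)+1$. If $b_1\ne b_2$ and $\langle b_1-b_2\rangle\subsetneq G^\vee$, then $\Theta_2$ has first slot $\overline G=\ker(b_1-b_2)\subsetneq G$ and is killed by the filter, yielding the same arithmetic $-1=(-1)+(-1)+1$.

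The main (indeed only) obstacle is the three-case analysis of (B2); everything else is either vacuous or a one-line check. Once these relations are verified, the $\bZ$-linear extension descends to the desired homomorphism $\varphi^G\colon\Burn_3^{G,C}(G)\to\bZ$.
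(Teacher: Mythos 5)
Your proposal is correct and follows the same route as the paper, whose proof simply asserts that it is straightforward to check that $\varphi^G$ respects the relations \textbf{(B1)}--\textbf{(B3)} with $H=G$ (forcing $Y=1$); you have carried out that verification explicitly, including the correct three-case analysis of \textbf{(B2)} and the observation that relations with $H\ne G$ die under the filter.
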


\begin{proof}
The abelian group $\Burn_3^{G,C}(G)$ is generated by symbols with $G$ as first argument, and has relations given by
\textbf{(B1)}--\textbf{(B3)} with $H=G$.
It is straightforward to verify that $\varphi^G$ respects the relations.
\end{proof}

This yields a more classical analog of \cite[Theorem 3.6]{CKK}:  

\begin{prop}
\label{prop:G}
Let $X$ be a smooth projective rationally connected threefold with a regular action of an abelian group $G$ and 
$$
X^G=\sqcup_{\alpha} F_{\alpha}
$$ 
the decomposition of the $G$-fixed locus 
into a disjoint union of smooth irreducible components.  
Let $C$ be an irreducible smooth projective curve of genus $\ge 2$ and 
\begin{itemize}
\item $I_1$ be the number of $F_{\alpha}$ isomorphic to $C$, 
\item $I_2$ be the number of $F_{\alpha}$ birational to $C\times \bP^1$, and 
\item $I_3$ be the number of factors of the intermediate Jacobian $\IJ(X)$ isomorphic to $\mathrm{J}(C)$, with trivial $G$-action. 
\end{itemize}
Then 
$$
I:=-I_1-2I_2+
I_3
$$
is a $G$-equivariant birational invariant, given by
\[ \varphi^G([X\actsfromright G]^{G,C}). \]
Furthermore, if $I\neq 0$ then the $G$-action on $X$ is not linearizable, and also not projectively linearizable. 
\end{prop}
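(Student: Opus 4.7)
The plan is to assemble the invariant $I$ from the machinery already in place: the birational invariance of $[X\actsfromright G]^C$ in $\Burn_3^C(G)$ (Theorem~\ref{thm:main}), the filter quotient $\Burn_3^C(G)\to \Burn_3^{G,C}(G)$, and the homomorphism $\varphi^G$ of Proposition~\ref{prop:toZ}. The composite produces a $\bZ$-valued $G$-equivariant birational invariant, and the tasks are to (i) evaluate this composite on $[X\actsfromright G]^C$ and recognize the combination $-I_1-2I_2+I_3$, and (ii) verify that the invariant vanishes on $\bP^3$ under any (projectively) linear $G$-action.

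For step (i), I would pass to a divisorial-form model of $X$ and inspect the three sums in the formula for $[X\actsfromright G]^C$. Taking the image in $\Burn_3^{G,C}(G)$ retains only symbols whose first argument is $G$. In the first sum, these are curves $F\subseteq X^G$ with $F\cong C$, each producing a symbol $(G,1\actsfromleft k(F),(b_1,b_2))$ that maps to $-1$ under $\varphi^G$, for a total contribution of $-I_1$. In the second sum, $H_D=G$ forces $G$ to act trivially on $D$, so these are divisors $D\subseteq X^G$ birational to $C\times\bP^1$, each symbol mapping to $-2$, for $-2I_2$. In the third sum, the constraint $H_\delta=G$ forces $G_\delta=G$ and $Y_\delta=1$, so the surviving symbols correspond precisely to indecomposable factors of $(\IJ(X),\theta_X)$ isomorphic to $(\mathrm{J}(C),\theta_C)$ that carry the trivial $G$-action, each contributing $+1$, for $+I_3$. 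Summing yields $\varphi^G([X\actsfromright G]^{G,C})=I$, with independence of the chosen divisorial-form model guaranteed by Theorem~\ref{thm:main}.

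For step (ii), I would check that $I$ vanishes on a divisorial-form model of $\bP^3$ equipped with any (projectively) linear $G$-action. On $\bP^3$ itself, the $G$-fixed locus is a disjoint union of linear subspaces arising from common eigenspaces in $\bC^4$ of $G$ (or of a central extension thereof), and $\IJ(\bP^3)=0$. These properties are preserved by the blow-ups with smooth rational centers needed to reach divisorial form: fixed components remain rational, hence never isomorphic to $C$ (of genus $\ge 2$) nor birational to $C\times\bP^1$ (both have positive irregularity), and the intermediate Jacobian stays a product of Jacobians of rational curves, hence zero. Thus $I=0$ on any such model, and by step (i) a nonzero value of $I(X)$ precludes $X\sim_G\bP^3$ with a linear or projectively linear action.

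The delicate point is in step (i), specifically the identification of the third summand with $I_3$: this relies on the uniqueness of the $G$-equivariant indecomposable decomposition of $(\IJ(X),\theta_X)$ recalled in Section~\ref{sect:gen} (from \cite[Corollary 3.2]{CTT}), together with the observation that a factor on which $G$ acts trivially corresponds precisely to a singleton orbit $\omega=\{\delta\}$ with $H_\delta=G_\delta=G$, and is automatically $C$-relevant.
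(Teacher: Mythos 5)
Your proposal is correct and follows essentially the same route as the paper, which simply notes that the first statement is immediate from the definitions of $[X\actsfromright G]^{G,C}$ and $\varphi^G$, and proves the second by passing to a standard model of $\bP^3$ and observing that a (projectively) linear action cannot fix higher-genus curves or nonrational surfaces. Your steps (i) and (ii) just spell out these two points in more detail, including the (correct) observation that the intermediate Jacobian of such a model remains trivial.
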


\begin{proof}
The proof of the first statement is 
immediate from the definition of the class $[X\actsfromright G]^{G,C}$ and map $\varphi^G$.

To show the second statement, we pass to a standard model of the $G$-action on $\bP^3$, as in \cite{KT22}, 
and observe that the $G$-action cannot fix higher genus curves or nonrational surfaces. 
\end{proof}

\section{Applications}
\label{sect:appl}

In this section we provide several applications of the formalism of $C$-localized Burnside groups. The applications are most interesting when the group $G$ is small -- for large $G$, one can often deploy techniques from birational rigidity. For this reason, we focus on cyclic groups of order 2 and 3. 
For the treatment of the examples given here, the equivariant Burnside groups without $C$-localization are insufficient, on account of the relations for symbols in $\Burn_3(G)$. 

\subsection*{Involutions}

The first steps towards classification of involutions in the Cremona group $\Cr_3$ were undertaken by Prokhorov in  \cite{pro-inv}. Following the classification of involutions in $\Cr_2$, which is based on the existence of higher-genus curves in the fixed locus of the action, Prokhorov considered involutions $\iota$ on rational $X$ with a {\em nonuniruled} divisor 
in the fixed locus $X^\iota$. 
In \cite{CTT}, we constructed nonconjugated involutions in $\Cr_3$ without any divisors in the fixed locus; this was based on the intermediate Jacobian torsor obstruction, which already obstructs linearizability. 
Here, we offer further examples of involutions $\iota$ without nonuniruled divisors in $X^\iota$.

\begin{exam}
\label{exam:dp22}
We return to Example~\ref{exam:d2}: $X=S\times \bP^1$, where $S$ is a degree 2 del Pezzo surface and $G=\bZ/2\bZ$, acting via the covering involution on $S$, which fixes a smooth quartic curve $C$.
We have
$$
I_1=I_3=0, \quad I_2=1, 
$$
thus $I=-2$ in
Proposition~\ref{prop:G}.
In particular, the $G$-action on $X$ is not linearizable; see \cite[Theorem I]{CKK}.  However, by \cite[Theorem 1.1]{BP13}, 
we have 
$$
\rH^1(G, \Pic(S))=(\bZ/2\bZ)^6,
$$
so the $G$-action on $S$ is not even stably linearizable. 
\end{exam}

Let $G:=\langle \iota\rangle$ and  
$C$ be an irreducible smooth projective curve of genus $\ge 2$. 
We spell out 
generators and relations in $\Burn_3^C(G)$, taking into account that the only possibilities for a $C$-relevant
components of $\IJ(X)$ are
\begin{itemize}
    \item $\mathrm{J}(C)$ with trivial or nontrivial $G$-action,
    \item $\mathrm{J}(C)\times \mathrm{J}(C)$ with $G$-action permuting the factors. 
\end{itemize}
For the generators, we have
\begin{enumerate}
    \item $\mathfrak s_1:=(G, 1\actsfromleft k(C), (1,1))$,
    \item $\mathfrak s_2:=(G, 1\actsfromleft k(C\times \bP^1),(1))$,
    \item $\mathfrak s_3:=(G, \mathrm{J}(C)\actsfromright 1)$, 
    \item $\mathfrak s_4:=(1, \mathrm{J}(C)\actsfromright G)$, 
    \item $\mathfrak s_5:=(1, \mathrm{J}(C)\actsfromright 1)$.  
\end{enumerate}
For $C$ hyperelliptic, every involution on $\mathrm{J}(C)$ can be realized on $C$; thus,  
relation {\bf (B3)} implies that the symbols $\mathfrak s_4$ and $\mathfrak s_5$ 
vanish in $\Burn_3^C(G)$. Relations {\bf (B1)} and 
{\bf (B2)} yield
$$
\mathfrak s_1+\mathfrak s_3=0, \quad \mathfrak s_1=\mathfrak s_2+ \mathfrak s_3.
$$
It follows that 
$$
\Burn_3^C(G)\simeq \bZ
$$ 
and the homomorphism $\varphi^G$ of Proposition~\ref{prop:toZ} is an isomorphism; in particular, 
for $C$ hyperelliptic, the invariant $I$ of Proposition~\ref{prop:G} is the {\em only} invariant of involutions accessible via the $C$-localized Burnside groups.   

When $C$ is non-hyperelliptic and the $G$-action on $\mathrm{J}(C)$ does not arise from an automorphism of $C$, the corresponding symbol of type $\mathfrak s_4$ does not participate in blow-up relations.
In this case, $\Burn_3^C(G)$ is a free abelian group generated by $\mathfrak s_1$ and such symbols of type $\mathfrak s_4$.  

The following example gives an alternative approach to \cite[Example 6.9]{CTT}.

\begin{exam}
\label{exam:quadric}
Consider 
$$
X\subset \bP^1_{(t_1:t_2)}\times \bP^3_{(x_1:x_2:x_3:x_4)},
$$ 
given by the vanishing of 
$$
\sum_{i=0}^nt_1^it_2^{n-i} (f_i(x_1,x_2)+g_i(x_3,x_4)),  \quad  n\ge 3, 
$$
for general binary quadratic forms $f_i,g_i$, so that $X$ is smooth. 
Projection to $\bP^1$ yields a quadric surface bundle, with discriminant cover a smooth hyperelliptic curve $C$ of genus $g(C)=2n-1$, see \cite[Section 6]{CTT}. The threefold $X$ is rational, with $\IJ(X)=\mathrm{J}(C)$.  
The involution
$$
\iota\colon (x_1:x_2:x_3:x_4)\mapsto (-x_1:-x_2:x_3:x_4)
$$
fixes two (nonisomorphic, for general $f_i,g_i$) 
hyperelliptic curves $C',C''$ of genus $n-1$. Applying Proposition~\ref{prop:G}
to either $C'$  or $C''$ shows that $I\neq 0$ and the action is not linearizable.
\end{exam}

\begin{exam} 
\label{exam:incomp}
Assume that $X$ is rational with $\IJ(X)=\mathrm{J(C)}$, for 
a smooth projective non-hyperelliptic curve $C$ of genus $\ge 3$. 
Assume that 
the $G$-action on $\IJ(X)$ does not come from any $G$-action on $C$, i.e., some element of $G$ acts by an automorphism, not in the image of the homomorphism \eqref{eqn.AutAut}.
Then the $G$-action is not linearizable. 
Examples of this arise from conic bundles $X\subset \bA^2\times \bP^2$ given by
$$
t_1t_2=f(x_1,x_2,x_3), 
$$
where $f$ is a form of degree $4$ defining a smooth non-hyperelliptic curve $C\subset\bP^2$, and $\iota$ 
acts by switching $t_1$ and $t_2$. 
This situation arises also from $2$-nodal cubic threefolds, see \cite[Theorems 2.3 and 3.3]{CTZ}.  

We claim that the action of $\iota$ on $\IJ(X)$ is by $(-1)$. There are two families $\{\ell_c\}, \{\ell'_c\}$ of vertical lines in the conic bundle, each parametrized by $C$. The Abel-Jacobi map for such a family of lines defines a non-equivariant isomorphism
\[
\JJ(C) \cong \IJ(X).
\]
The two families are swapped by the $G$-action.
The evident rational equivalence
\[
\ell_{c_0} + \ell'_{c_0} \sim_{\mathrm{rat}}\ell_{c_1} + \ell'_{c_1}, 
\]
for $c_0$, $c_1\in C$,
justifies the claim.
\end{exam}

\subsection*{Actions of $\bZ/3\bZ$}

Our first application is a down-to-earth version of \cite[Example 3.10]{CKK}. 

\begin{exam} 
\label{exam:kk}
Let $X\subset \bA^4$ be given by
$$
x_1x_2x_3=P(x_4), 
$$
where $P$ is a general polynomial of degree $3d$ with $d\ge 2$, with an action of $G=\bZ/3\bZ$ permuting the first $3$ variables. By embedding $X$ in
$$
\mathbb P^1_{(s_1:t_1)} \times \mathbb P^1_{(s_2:t_2)} \times \mathbb P^1_{(s_3:t_3)} \times \mathbb A^1_{x_4},
$$
with $x_i=s_i/t_i$, for $i=1,2,3$, we obtain the defining equation
\[
s_1s_2s_3=P(x_4)t_1t_2t_3
\] 
of a fibration in degree $6$ del Pezzo surfaces, with $3$ singular points of type $\mathsf A_1$ in the fiber over each zero of $P$ for a total of $9d$ singular points.
Compactification in the $\bP^1\times\bP^1\times \bP^1$-fibration
\[
\bP(\cO_{\bP^1}(d)\oplus \cO_{\bP^1})
\times_{\bP^1}\bP(\cO_{\bP^1}(d)\oplus \cO_{\bP^1})
\times_{\bP^1}\bP(\cO_{\bP^1}(d)\oplus \cO_{\bP^1})
\]
over $\bP^1$ does not introduce any further singularities.
By blowing up the singular points we get a smooth projective model $\widetilde{X}$ with
\[ \pi\colon \widetilde{X}\to \bP^1. \]

By Proposition \ref{prop:G} the $G$-action
is not linearizable,
since $G$ fixes a curve of genus $3d-2$, and
$\IJ(\widetilde{X})$ is trivial; the last fact makes no reference to the $G$ action and may be explained either directly or via monodromy.
Directly, $\widetilde{X}$ can be obtained (non-equivariantly) by repeatedly blowing up rational curves in
a $\bP^2$-bundle over $\bP^1$; in doing so the intermediate Jacobian starts out and remains trivial.
But also there is the trivial monodromy of $\pi$, from which it is possible to conclude that $\IJ(\widetilde{X})$ is trivial
using
\cite[Corollary 4.3]{Kanev}.
\end{exam}

\begin{exam}
\label{exam:3node}
Let
\[
X\subset \bP^4_{(x_1:x_2:x_3:x_4:x_5)}
\]
be the $3$-nodal cubic threefold given by 
$$
x_1x_2x_3 +(x_1+x_2+x_3)x_4x_5 + f_3(x_4,x_5)=0,
$$
where $f_3(x_4,x_5)$
is
\begin{enumerate} 
\item 
$\lambda(x_4+x_5)^3$, or
\item $\lambda(x_4+x_5)(x_4-x_5)^2$,  or
\item 
$(x_4 +x_5)(\lambda x_4 +\mu x_5)(\mu x_4 +\lambda x_5)$,
\end{enumerate}
with $\lambda,\mu\in k^\times$
($\lambda^2\ne -1/16$ in (1),
$\lambda^2\ne -27/64$ in (2),
$(\lambda+\mu)^4\ne -1$ and
$(\lambda-\mu)^6\ne 27\lambda\mu$ in (3)).
The intermediate Jacobian of a minimal resolution $\widetilde{X}$ of $X$ is the Jacobian of a genus $2$ curve.
Indeed, projection from two of the nodes expresses $X$ as a conic bundle over $\bP^2$, with a quartic curve as degeneracy locus and equation in split form as in Example \ref{exam:incomp}, see also \cite[Section 3]{CTZ}.
The quartic curve has exactly one node and thus geometric genus $2$.

Let $G=\bZ/3\bZ$ act by permuting the first three variables.
As explained in \cite[Section 4]{CTZ}, 
an equivariant birational model of $\widetilde{X}$ is a
fibration
$$
\pi\colon Y\to \bP^1_{(x_4:x_5)},
$$
with generic fiber a del Pezzo surface of degree 6; it is given by
$$
(x_1:x_2:x_3:x_4:x_5)\mapsto (x_4:x_5). 
$$
The model $Y$ is obtained from $\widetilde{X}$ by performing flops.
Thus
\[ \IJ(Y)\cong \IJ(\widetilde{X}). \] 

%Then the $G$-action on $X$ is not linearizable.  
Since $G$ does not act on $x_4,x_5$, the $G$-action is trivial on the base $\bP^1$. The $G$-fixed locus on the model $Y$ is an elliptic curve.  
%The intermediate Jacobian of $Y$ is the Jacobian of a genus $2$ curve $C$, 
%with trivial action of $G$. 
The generic fiber of $\pi$ has Picard rank $3$; it admits three conic fibrations.
The monodromy factors through the $\bZ/2\bZ$, exchanging opposite pairs of lines,
and determines a double cover $C$ of $\bP^1$, branched over $6$ points.
The relative Fano variety of lines is a union of three copies of $C$, permuted by $G$.
The variety of vertical conics is a union of three $\bP^1$-bundles over $\bP^1$,
permuted by $G$.
The variety of vertical rational cubics is a $\bP^2$-bundle over $C$,
where $G$ acts trivially on $C$.

Since the class of vertical rational cubics is $G$-invariant,
it defines the equivariant Abel-Jacobi map
\[ \JJ(C)\to \IJ(Y). \]
We claim that this is surjective, so that the triviality of the $G$-action on the intermediate Jacobian is a consequence of
the trivial $G$-action on $C$.
To see this, we follow the proof of \cite[Corollary 4.3]{Kanev},
but instead of using the relative Fano variety of lines we use
a family of rational cubics parametrized by $C$,
plus conics parametrized by three copies of $\bP^1$.
Such families may be obtained by choosing a section of the
$\bP^2$-bundle over $C$, respectively, the $\bP^1$-bundles over $\bP^1$.

We have observed that $\IJ(Y)$ is the Jacobian of a genus $2$ curve; to this we apply Proposition~\ref{prop:G} and obtain 
$$
I_1=I_2=0, \quad I_3=1.
$$
We conclude that the $G$-action on $X$ is not linearizable.
(Though not needed here, we record the fact $\IJ(Y)\cong \mathrm{J}(C)$, obtained by the analysis of \cite[Section 5]{Kanev}, with $q=3$ in the notation of loc.\ cit.)
\end{exam} 

Example \ref{exam:3node} strengthens \cite[Proposition 4.3]{CTZ}, which showed nonlinearizability of a {\em very general} member of the third family, via specialization.

\bibliographystyle{alpha} 
\bibliography{BurnHodge}

\end{document}